\newtheorem{thm}{Theorem}[section]
\newtheorem{prop}[thm]{Proposition}
\newtheorem{lem}[thm]{Lemma}
\newtheorem{rem}[thm]{Remark}
\newtheorem{que}[thm]{Question}
\title{An infinite family of prime knots with a certain property for the clasp number}
\author{Teruhisa KADOKAMI and Kengo KAWAMURA}
\date{}                                           
\begin{document}

\maketitle


\begin{abstract}

The clasp number $c(K)$ of a knot $K$ is the minimum number of clasp singularities among all clasp disks bounded by $K$.
It is known that the genus $g(K)$ and the unknotting number $u(K)$ are lower bounds of the clasp number, that is, $\max\{g(K),u(K)\} \leq c(K)$.
Then it is natural to ask whether there exists a knot $K$ such that $\max\{g(K),u(K)\}<c(K)$.
In this paper, we prove that there exists an infinite family of prime knots such that the question above is affirmative.

\end{abstract}


\section{Introduction}

It is known that every knot in $S^{3}$ bounds a singular disk in $S^{3}$ whose singular set consists of only clasp singularities as illustrated in Figure~\ref{fig:clasp}.
\begin{figure}[h]
 \centering
 \includegraphics{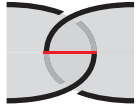}
 \caption{A clasp singularity}
 \label{fig:clasp}
\end{figure}
We call such a singular disk a {\it clasp disk} of the knot.
Let $K$ be a knot in $S^{3}$, and $D$ a clasp disk of $K$.
Let $c(D)$ denote the number of clasp singularities in $D$.
Then the {\it clasp number} of $K$ is $c(K)=\min \{\,c(D)\,|\,\mbox{$D$ is a clasp disk of $K$}\,\}$.
We refer the reader to \cite{Co,Hi,Ka,Mh,Mo1,Mo2,Mo3,Sh} for related topics of the clasp number.
In this paper, we suppose that every link is in $S^{3}$ and oriented, and the notation of prime knots follows Rolfsen's book \cite{Ro}.

For a knot $K$, let $g(K)$ and $u(K)$ be the genus and the unknotting number of $K$ respectively.
In \cite{Sh}, T.~Shibuya proved that the genus and the unknotting number are lower bounds of the clasp number.
In other words, for a knot $K$ we have $\max\{g(K),u(K)\} \leq c(K)$.
Most of the prime knots with up to $10$ crossings satisfy the equality above (cf.\ Appendix).
Then it is natural to ask the following question.
\begin{que}\label{que1}
Does there exist a prime knot $K$ such that $\max\{g(K),u(K)\}<c(K)$?
\end{que}
For an integer $n$, let $K_{n}$ be the knot as illustrated in Figure~\ref{fig:ori-k_n}.
\begin{figure}[h]
 \centering
 \includegraphics{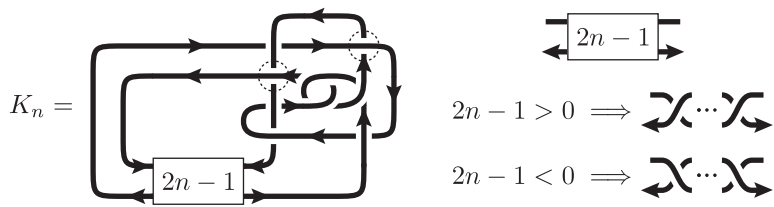}
 \caption{The knot $K_{n}$}
 \label{fig:ori-k_n}
\end{figure}
Here, an integer in the rectangle denotes the number of half twists.
We note that $K_{0}=\overline{3_{1}}\#4_{1}$ (Figure~\ref{fig:k_0}) and $K_{1}=10_{97}$ (Figure~\ref{fig:k_1}).
\begin{figure}[h]
\begin{minipage}{100mm}
 \centering
 \includegraphics{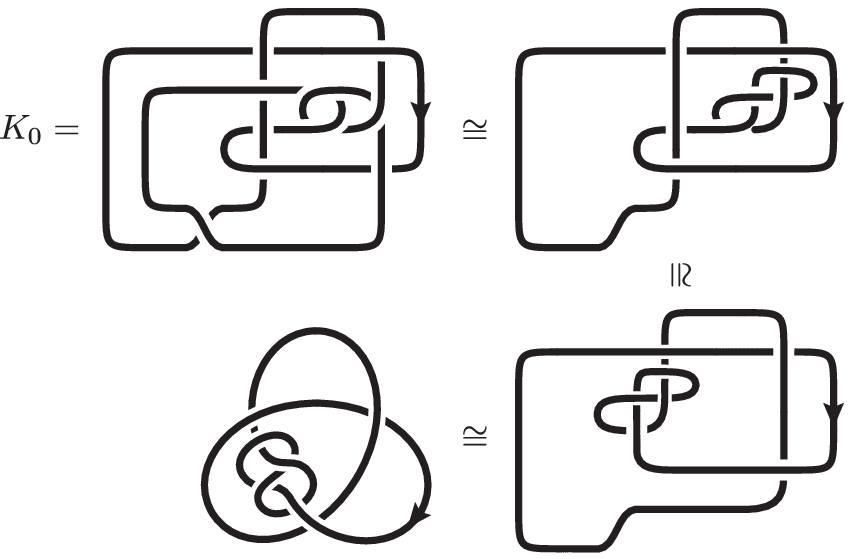}
 \caption{$K_{0}=\overline{3_{1}}\#4_{1}$}
 \label{fig:k_0}
\end{minipage}
\begin{minipage}{40mm}
 \centering
 \includegraphics{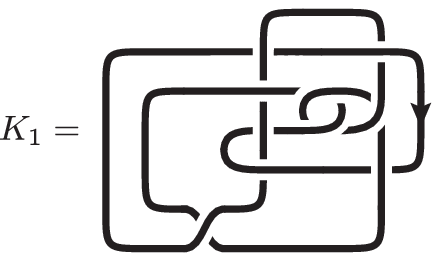}
 \caption{$K_{1}=10_{97}$}
 \label{fig:k_1}
\end{minipage}
\end{figure}
Note that all the knots $K_{n}$ are distinct one another (Proposition~\ref{prop:Conway}).
The main result of this paper is the following theorem.
It is the affirmative answer of Question~\ref{que1}.
\begin{thm}\label{thm1}
If $n$ is odd, then the knots $K_{n}$ are prime and we have $\max\{g(K_{n}),u(K_{n})\}<c(K_{n})$.
\end{thm}

In Appendix, we show a table for the clasp numbers of prime knots with up to $10$ crossings.
From the table, we raise the following question concerning an upper bound of the clasp number.
\begin{que}\label{que2}
Let $cr(K)$ be the crossing number of a knot $K$.
For any non-trivial knot $K$, does the following inequality hold?
\[
c(K)\leq \left[\frac{cr(K)-1}{2}\right],
\]
where $[r]$ denotes the integer part of a rational number $r$.
\end{que}
Question~\ref{que2} is affirmative for prime knots with up to $10$ crossings (cf.\ Appendix) and the knots $K_{n}$ (Proposition~\ref{prop:que2}).
In Section~\ref{sec:suff}, we calculate the Conway polynomial of a knot $K$ with $c(K)\leq2$, giving an alternative proof of Morimoto's result \cite{Mo3} (Lemma~\ref{lem:Conway}).
(Since it seems to be hard to obtain a reference \cite{Mo3}, it is also our purpose to expose it to the reader.)
Moreover we give a sufficient condition for a knot $K$ to satisfy $c(K) \geq 3$.
In Section~\ref{sec:proof}, we investigate the knots $K_{n}$ and prove Theorem~\ref{thm1},
and we also prove that Question~\ref{que2} is affirmative for the knots $K_{n}$.


\section{A sufficient condition for a knot $K$ to satisfy $c(K) \geq 3$}\label{sec:suff}

In \cite{Mo3}, K.~Morimoto calculated the Alexander module of a knot from a clasp disk.
In this section, we calculate the Conway polynomial of a knot $K$ with $c(K)\leq2$, giving an alternative proof of Morimoto's result \cite{Mo3}, and then we give a sufficient condition for a knot $K$ to satisfy $c(K) \geq 3$.

First, we prove the following lemma.
\begin{lem}\label{lem:Conway}{\rm (cf.\ \cite{Mo3})}
Let $K$ be a knot with $c(K) \leq 2$.
The Conway polynomial $\nabla_{K}(z)$ of $K$ is expressed as follows:
\[
\begin{array}{r}
\nabla_{K}(z)=\bigl(b_{1}b_{2}+\varepsilon b_{3}(b_{3}+\delta)\bigr)z^{4}
+(b_{1}+b_{2}-\varepsilon\delta)z^{2}+1~\medskip\\
(b_{1}, b_{2}, b_{3}\in \mathbb{Z},\ \varepsilon\in\{\pm 1\},\ \delta\in\{0,1\}).
\end{array}
\]
\end{lem}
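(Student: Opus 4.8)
The plan is to work directly with a clasp disk $D$ realizing $c(K) \leq 2$ and to compute the Conway polynomial from a Seifert surface built out of $D$. A clasp disk is an immersed disk whose only singularities are clasp intersections; by resolving each clasp singularity in the standard way (replacing the self-intersection by a band/tube), I can turn $D$ into an embedded orientable Seifert surface $F$ for $K$. Each clasp singularity contributes a handle, so a disk with $k$ clasp singularities yields a genus-$k$ (at most) surface with $2k$ generators for $H_1(F)$. With $k \leq 2$, this gives a Seifert surface of genus at most $2$, hence a Seifert matrix $V$ of size at most $4 \times 4$.

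**Computing the Seifert matrix.** The next step is to write down the Seifert form explicitly. I would choose a symplectic-type basis $\{a_1,b_1,a_2,b_2\}$ for $H_1(F)$ adapted to the two handles, where $a_i,b_i$ are the core and cocycle curves associated with the $i$-th clasp singularity. The key observation — and this is where Morimoto's analysis enters — is that the linking numbers determining the Seifert matrix entries are highly constrained by the clasp geometry: the local model of a single clasp fixes most of the linking/self-linking data up to a sign $\varepsilon \in \{\pm 1\}$ (the sign of the clasp) and a framing parameter $\delta \in \{0,1\}$, while the remaining freedom is carried by a few integer linking numbers $b_1, b_2, b_3$ recording how the handles link each other and themselves globally. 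The upshot is a $4\times 4$ Seifert matrix whose entries are expressed in terms of exactly these parameters $b_1,b_2,b_3,\varepsilon,\delta$.

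**Extracting the Conway polynomial.** Once $V$ is in hand, I would use the standard formula
\[
\nabla_K(z) = \det\bigl(t^{1/2} V - t^{-1/2} V^{T}\bigr), \qquad z = t^{1/2} - t^{-1/2},
\]
and simply expand the determinant. Since $V$ is $4 \times 4$, $\nabla_K(z)$ is a polynomial in $z^2$ of degree at most $2$, i.e.\ of the form $\alpha z^4 + \beta z^2 + 1$, with constant term $1$ because $\nabla_K(0)=1$ for a knot. The remaining work is to verify that the coefficients collapse to $\alpha = b_1 b_2 + \varepsilon b_3(b_3+\delta)$ and $\beta = b_1 + b_2 - \varepsilon\delta$ after substituting the constrained entries. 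The coefficient $\beta$ should come out as (twice) the $z^2$-coefficient governed by the signature-type pairing of the two handles, and $\alpha$ as the $4\times 4$ determinant term mixing the inter-handle linking $b_3$ with the self-linking data.

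**The main obstacle.** The genuinely delicate step is \emph{not} the determinant expansion but the bookkeeping in the previous paragraph: pinning down exactly which Seifert-matrix entries are forced by the local clasp model and which are free, and checking that the global handle-linking truly reduces to the three integers $b_1,b_2,b_3$ together with $\varepsilon,\delta$. One must argue carefully that changes of basis and isotopies of $F$ do not introduce additional independent parameters and that the self-framing contribution of each clasp is correctly captured by $\varepsilon$ and $\delta$. I would treat the two cases $k=1$ and $k=2$ separately — the case $k\le 1$ should give $\alpha = 0$ and reproduce the genus-$1$ form as a specialization (with the handle pairing reflected in a single parameter) — and then verify that the asserted closed form is consistent across both, confirming that the stated expression is exactly the image of the Conway polynomial over all admissible parameter choices.
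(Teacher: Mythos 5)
Your strategy coincides with the paper's: normalize the clasp disk, resolve each clasp to get a genus-two Seifert surface $F$, choose a basis of $H_1(F)$ adapted to the two clasps, and expand $\det(t^{1/2}V-t^{-1/2}V^{\mathrm T})$. But the step you defer as ``the main obstacle'' --- pinning down which Seifert-matrix entries are forced and which are free --- is the entire content of the lemma, and your proposal never carries it out. The paper does it by writing $D$ as a disk $B_{0}$ with two clasping bands $B_{1},B_{2}$, replacing each clasp by a plumbed Hopf band $H_i$ of the same sign $\varepsilon_i$, and taking $\alpha_i$ to be the core of $B_i$ and $\beta_i$ the core of $H_i$. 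Because the plumbing is local, one gets $\mathrm{lk}(\beta_i,\beta_j^{\pm})=0$ and $\mathrm{lk}(\beta_i,\alpha_j^{\pm})=0$ for $i\neq j$, $\mathrm{lk}(\beta_i,\beta_i^{+})=-\varepsilon_i$, and the asymmetric pair $\mathrm{lk}(\beta_i,\alpha_i^{+})=-1$, $\mathrm{lk}(\alpha_i,\beta_i^{+})=0$; this yields the block form
\[
V=\left(\begin{array}{cccc} a_{11} & a_{12} & 0 & 0\\ a_{21} & a_{22} & 0 & 0\\ -1 & 0 & -\varepsilon_{1} & 0\\ 0 & -1 & 0 & -\varepsilon_{2}\end{array}\right),
\]
from which the determinant computation is mechanical. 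Without these vanishing statements and exact values, the specific coupling $\varepsilon b_3(b_3+\delta)$ and the correction $-\varepsilon\delta$ in the $z^2$-coefficient cannot be derived; a generic genus-two Seifert matrix would give a much larger family of Conway polynomials, and the lemma would say nothing.

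A second, related gap is your reading of $\delta$. You describe it as ``a framing parameter'' attached to the local model of a single clasp, but in the paper $\delta=a_{21}-a_{12}=[\alpha_1]\cdot[\alpha_2]$ is the intersection number on $F$ of the two band cores, equal to $0$ or $1$ according as the cores of $B_1$ and $B_2$ are unlinked or linked in the disk $B_0$ (two chords of a disk meet at most once, and orientations can be chosen to make the sign positive). This is a global constraint coupling the two clasps, not local framing data, and it is exactly what restricts $\delta$ to $\{0,1\}$ rather than all of $\mathbb{Z}$; that restriction is used crucially in Proposition~\ref{prop:suff}. Your basis is also not quite symplectic when $\delta=1$, so insisting on a symplectic basis would obscure this. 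Finally, note the lemma only asserts one inclusion (every $K$ with $c(K)\le 2$ has $\nabla_K$ of the stated form), so your last sentence about the expression being ``exactly the image'' over all parameters proves more than is needed.
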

\begin{proof}
Let $K$ be a knot with $c(K)\leq2$.
Then there exists a clasp disk $D$ of $K$ such that $c(D)=2$.
We may assume that $D$ is a surface which is the union of a disk $B_{0}$ and two clasping bands $B_{1}$ and $B_{2}$ (see Figure~\ref{fig:example}).
Here, a clasping band means a pair of embedded $2$-disks in $S^{3}$ with a clasp singularity as illustrated in Figure~\ref{fig:clasping_band}.
The sign of a clasping band is defined as the linking number of the Hopf link bounding the clasping band.
\begin{figure}[h]
 \centering
 \includegraphics{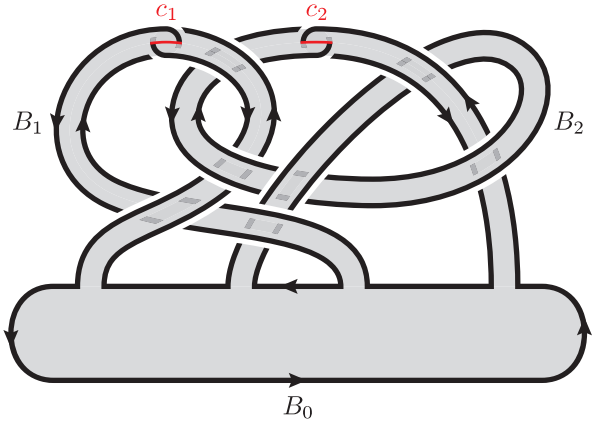}
 \caption{Example of a clasp disk}
 \label{fig:example}
\end{figure}
\begin{figure}[h]
 \centering
 \includegraphics{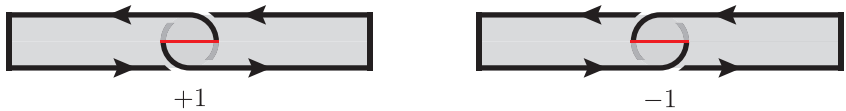}
 \caption{Positive and negative clasping bands}
 \label{fig:clasping_band}
\end{figure}
For each $i$ $(i=1,2)$, let $c_{i}$ be the clasp singularity in $D$ corresponding to the clasping band $B_{i}$.
We define the sign $\varepsilon_{i}\in\{\pm1\}$ of the clasp singularity $c_{i}$ as the sign of $B_{i}$.
For each $\varepsilon\in\{\pm1\}$, an $(\varepsilon)$-Hopf band is an annulus in $S^{3}$ whose boundary is a Hopf link with linking number $\varepsilon$.
Let $F$ be the oriented surface obtained from $D$ by the following operation:
\begin{enumerate}
\item[($*$)] For each $i$ $(i=1,2)$, replacing the neighborhood of the clasp singularity $c_{i}$ with a plumbing of an $(\varepsilon_{i})$-Hopf band, denoted by $H_{i}$, as illustrated in Figure~\ref{fig:replacing}.
\end{enumerate}
Note that the surface $F$ is a genus two Seifert surface of $K$.
For example, the Seifert surface in Figure~\ref{fig:induced_surface} is obtained from the clasp disk in Figure~\ref{fig:example} by the operation $(*)$.
We take a homological basis $\{[\alpha_{1}], [\alpha_{2}], [\beta_{1}], [\beta_{2}]\}$ of $H_{1}(F;\mathbb{Z})$ as follows (see Figure~\ref{fig:induced_surface}):
\begin{enumerate}
\item[(1)] For each $i$ $(i=1,2)$, $\alpha_{i}$ is a loop corresponding to the core of the clasping band $B_{i}$ and $\beta_{i}$ is the core loop of the Hopf band $H_{i}$.
\item[(2)] Orientations of $\alpha_{i}$ and $\beta_{i}$ are chosen so that the intersection number $[\beta_{i}]\cdot[\alpha_{i}]$ is $1$.
\end{enumerate}
\begin{figure}[h]
 \centering
 \includegraphics{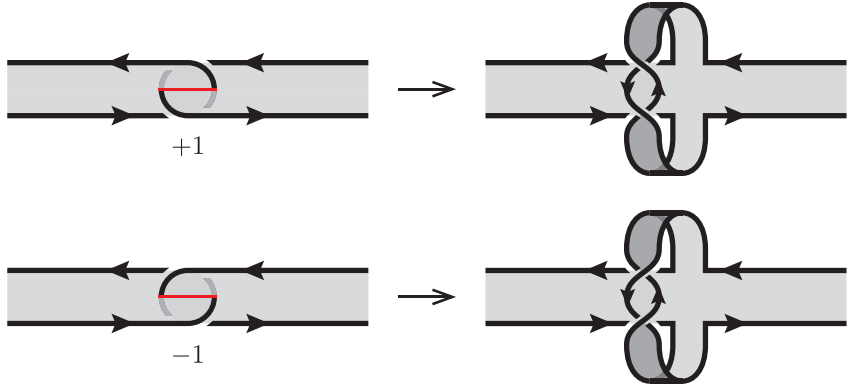}
 \caption{A construction of a Seifert surface $F$ from a clasp disk $D$}
 \label{fig:replacing}
\end{figure}
\begin{figure}[h]
 \centering
 \includegraphics{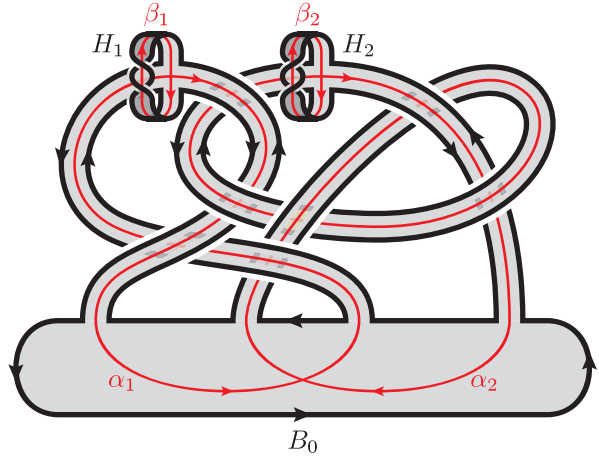}
 \caption{The Seifert surface obtained from the clasp disk in Figure~\ref{fig:example} and a homological basis $\{[\alpha_{1}], [\alpha_{2}], [\beta_{1}], [\beta_{2}]\}$}
 \label{fig:induced_surface}
\end{figure}

Let $a_{ij}=\mathrm{lk}(\alpha_{i},\alpha_{j}^{+})$ be the linking number of $\alpha_{i}$ and $\alpha_{j}^{+}$, where $\alpha_{j}^{+}$ is a loop obtained by pushing $\alpha_{j}$ to the positive normal direction of $F$.
The Seifert matrix $V$ of $K$ obtained from the homological basis $\{[\alpha_{1}], [\alpha_{2}], [\beta_{1}], [\beta_{2}]\}$ is calculated as follows:
\[
V=
\left(
\begin{array}{cccc}
 a_{11} & a_{12} &                0 &                0\\[1mm]
 a_{21} & a_{22} &                0 &                0\\[1mm]
     -1 &      0 & -\varepsilon_{1} &                0\\[1mm]
      0 &     -1 &                0 & -\varepsilon_{2}
\end{array}
\right)\text{.}
\]
Therefore the Alexander polynomial $\Delta_{K}(t)$ of $K$ is calculated as follows:
\begin{align*}
\Delta_{K}(t)
&
\doteq\det(tV-V^{\rm T})\\[1mm]
&
=\det
\left(
\begin{array}{cccc}
   a_{11}(t-1) & a_{12}t-a_{21} &                     1 &                     0\\[1mm]
a_{21}t-a_{12} &    a_{22}(t-1) &                     0 &                     1\\[1mm]
            -t &              0 & -\varepsilon_{1}(t-1) &                     0\\[1mm]
             0 &             -t &                     0 & -\varepsilon_{2}(t-1)
\end{array}
\right)\\[1mm]
&
=\det
\left(
\begin{array}{cc}
 \varepsilon_{1}a_{11}(t-1)^{2}-t & \varepsilon_{1}(t-1)(a_{12}t-a_{21}) \\[1mm]
 \varepsilon_{2}(t-1)(a_{21}t-a_{12}) & \varepsilon_{2}a_{22}(t-1)^{2}-t 
\end{array}
\right)\\[1mm]
&
=\bigl(\varepsilon_{1}a_{11}(t-1)^{2}-t\bigr)\bigl(\varepsilon_{2}a_{22}(t-1)^{2}-t\bigr)-\varepsilon_{1}\varepsilon_{2}(t-1)^{2}(a_{12}t-a_{21})(a_{21}t-a_{12})\\[1mm]
&
=\varepsilon_{1}\varepsilon_{2}\bigl(a_{11}a_{22}-a_{12}a_{21}\bigr)(t-1)^{4}+\bigl(-\varepsilon_{1}a_{11}-\varepsilon_{2}a_{22}+\varepsilon_{1}\varepsilon_{2}(a_{21}-a_{12})^{2}\bigr)t(t-1)^{2}+t^{2}.
\end{align*}
(The symbol $\doteq$ means $``$is equal to, up to multiplication by a unit of $\mathbb{Z}[t,t^{-1}]$$"$, and $V^{\rm T}$ means the transpose of $V$.)
Note that if $\alpha_{1}\cap\alpha_{2}\neq\emptyset$, then we may assume that orientations of $\alpha_{1}$ and $\alpha_{2}$ are chosen so that the intersection number $[\alpha_{1}]\cdot[\alpha_{2}]$ is $1$ (see Figure~\ref{fig:induced_surface}).
We put $b_{1},b_{2},b_{3},\varepsilon$ and $\delta$ as follows:
\begin{align*}
b_{1}&:=-\varepsilon_{1}a_{11},~b_{2}:=-\varepsilon_{2}a_{22},~b_{3}:=a_{12},\\[1mm]
\varepsilon&:=-\varepsilon_{1}\varepsilon_{2}~\mbox{and}~
\delta:=a_{21}-a_{12}=
\left\{
\begin{array}{ll}
0 & (\alpha_{1}\cap\alpha_{2}=\emptyset),\\[1mm]
1 & (\alpha_{1}\cap\alpha_{2}\neq\emptyset).
\end{array}
\right.
\end{align*}
By these substitutions and $\delta^{2}=\delta$, we have the following:
\[
\begin{array}{r}
\Delta_{K}(t)
\doteq\bigl(b_{1}b_{2}+\varepsilon b_{3}(b_{3}+\delta)\bigr)(t-1)^{4}
+(b_{1}+b_{2}-\varepsilon\delta)t(t-1)^{2}+t^{2}~\medskip\\
(b_{1}, b_{2}, b_{3}\in \mathbb{Z},\ \varepsilon\in\{\pm 1\},\ \delta\in\{0,1\}).
\end{array}
\]

For the Conway polynomial $\nabla_{K}(z)$ of $K$, since $\nabla_{K}(t^{-\frac{1}{2}}-t^{\frac{1}{2}})=\det(t^{\frac{1}{2}}V-t^{-\frac{1}{2}}V^{\rm T})=t^{-2}\det(tV-V^{\rm T})$ and $z^{2}=\displaystyle\frac{(t-1)^{2}}{t}$, we obtain the following:
\[
\nabla_{K}(z)=\bigl(b_{1}b_{2}+\varepsilon b_{3}(b_{3}+\delta)\bigr)z^{4}
+(b_{1}+b_{2}-\varepsilon\delta)z^{2}+1.
\]
\end{proof}

We mention that if a knot $K$ has $c(K)=1$, then we obtain $\nabla_{K}(z)=b_{1}z^{2}+1$ by substitutions $b_{2}=0,b_{3}=0~\mbox{and}~\delta=0$.


By Lemma~\ref{lem:Conway}, we obtain the following.
\begin{prop}\label{prop:suff}
Suppose that a knot $K$ has $\nabla_{K}(z)=m_{4}z^{4}+m_{2}z^{2}+1$.
If $m_{4}\equiv 3 \pmod{8}$ and $m_{2}\equiv 2 \pmod{4}$, then $c(K) \geq 3$.
\end{prop}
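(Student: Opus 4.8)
The plan is to argue by contradiction. Suppose $c(K)\le 2$. Then Lemma~\ref{lem:Conway} applies, so there exist integers $b_{1},b_{2},b_{3}$, a sign $\varepsilon\in\{\pm1\}$ and $\delta\in\{0,1\}$ with
\[
m_{4}=b_{1}b_{2}+\varepsilon b_{3}(b_{3}+\delta),\qquad m_{2}=b_{1}+b_{2}-\varepsilon\delta .
\]
This converts the topological statement into a purely arithmetic one: under the hypotheses $m_{4}\equiv 3\pmod 8$ and $m_{2}\equiv 2\pmod 4$, the system above has no integer solution. The whole argument is then a finite case check organized by the value of $\delta$ and by the parities of $b_{1},b_{2},b_{3}$, tracking residues modulo $8$; the two facts I would use throughout are that an integer square is $\equiv 0,1,4\pmod 8$ and that $b_{3}(b_{3}+1)$ is always even.

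The case $\delta=1$ is immediate. Here $m_{2}=b_{1}+b_{2}-\varepsilon\equiv 2\pmod 4$ forces $b_{1}+b_{2}$ to be odd, since $\varepsilon$ is odd; hence $b_{1},b_{2}$ have opposite parity and $b_{1}b_{2}$ is even. As $b_{3}(b_{3}+1)$ is also even, $m_{4}=b_{1}b_{2}+\varepsilon b_{3}(b_{3}+1)$ would be even, contradicting $m_{4}\equiv 3\pmod 8$.

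The case $\delta=0$ is the main obstacle, and it is exactly where both congruence hypotheses must be used together. Now $m_{2}=b_{1}+b_{2}\equiv 2\pmod 4$, so $b_{1},b_{2}$ have the same parity, while $m_{4}=b_{1}b_{2}+\varepsilon b_{3}^{2}$ must be odd. I would split into the two subcases. If $b_{1},b_{2}$ are both even, write $b_{1}=2p$, $b_{2}=2q$; the condition $m_{2}\equiv 2\pmod 4$ says $p+q$ is odd, so $pq$ is even and $b_{1}b_{2}=4pq\equiv 0\pmod 8$, whence $m_{4}\equiv\varepsilon b_{3}^{2}\pmod 8$, forcing $b_{3}$ odd and $m_{4}\equiv\pm 1\pmod 8$. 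If $b_{1},b_{2}$ are both odd, then $m_{2}\equiv 2\pmod 4$ forces $b_{1}\equiv b_{2}\pmod 4$, from which $b_{1}b_{2}\equiv 1$ or $5\pmod 8$, while $b_{3}$ must be even so $\varepsilon b_{3}^{2}\equiv 0$ or $4\pmod 8$; adding gives $m_{4}\equiv 1$ or $5\pmod 8$. In every subcase $m_{4}\not\equiv 3\pmod 8$, a contradiction.

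It is worth noting that the strength of the hypothesis $m_{2}\equiv 2\pmod 4$, rather than merely ``$m_{2}$ even'', is genuinely needed: without it one could have $b_{1}b_{2}\equiv 4\pmod 8$ in the even subcase or $b_{1}b_{2}\equiv 3,7\pmod 8$ in the odd subcase, either of which does realize $m_{4}\equiv 3\pmod 8$. Collecting the cases $\delta=1$ and $\delta=0$, no integer solution of the system exists, so the assumption $c(K)\le 2$ is untenable and therefore $c(K)\ge 3$.
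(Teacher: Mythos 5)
Your proof is correct: I checked each residue computation (in particular that $b_{1}\equiv b_{2}\pmod 4$ forces $b_{1}b_{2}\equiv 1$ or $5\pmod 8$, and that $4pq\equiv 0\pmod 8$ in the even subcase) and the case analysis is exhaustive. You take the same overall route as the paper — contradiction via Lemma~\ref{lem:Conway} and arithmetic modulo $8$ split on $\delta$ — but the arithmetic core is organized differently. The paper's key move is to form the quantity $(b_{1}-b_{2})^{2}$ (or $\bigl(\tfrac{b_{1}-b_{2}}{2}\bigr)^{2}$ when $\delta=0$), express it in terms of $m_{2}$, $m_{4}$ and $\varepsilon b_{3}(b_{3}+\delta)$, and observe that it would be congruent to $2,5,6$ or $7$ modulo $8$, hence not a square; this disposes of both cases in two or three lines each. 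You instead enumerate the parities of $b_{1},b_{2},b_{3}$ directly and show that every admissible configuration yields $m_{4}\equiv 1,5$ or $7\pmod 8$, never $3$. The paper's version is more compact; yours is longer but more transparent — your $\delta=1$ case needs only that $m_{2}$ is even and $m_{4}$ is odd, and your closing remark correctly identifies exactly where the full strength of $m_{2}\equiv 2\pmod 4$ is used, which the paper's discriminant computation hides inside the congruence $\tfrac{1}{4}m_{2}^{2}\equiv 1\pmod 8$.
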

\begin{proof}
Suppose that a knot $K$ has $c(K) \leq 2$ and $\nabla_{K}(z)=m_{4}z^{4}+m_{2}z^{2}+1$ with $m_{4}\equiv 3 \pmod{8}$ and $m_{2}\equiv 2 \pmod{4}$.
By Lemma~\ref{lem:Conway}, the following holds:
\[
m_{4}=b_{1}b_{2}+\varepsilon b_{3}(b_{3}+\delta)~~\mbox{and}~~m_{2}=b_{1}+b_{2}-\varepsilon\delta
\]
for some $b_{1},b_{2},b_{3}\in\mathbb{Z}$, $\varepsilon\in\{\pm1\}$ and $\delta\in\{0,1\}$.

{\it The case of $\delta=0$}:\
We obtain $m_{4}=b_{1}b_{2}+\varepsilon b_{3}^{2}$ and $m_{2}=b_{1}+b_{2}$.
Since $b_{1}+b_{2}$ is even, $b_{1}-b_{2}$ is also even.
Therefore,
\[
d:=\left(\frac{b_{1}-b_{2}}{2}\right)^{2}=\frac{1}{4}m_{2}^{2}-m_{4}+\varepsilon b_{3}^{2}\equiv 6+\varepsilon b_{3}^{2} \pmod{8}.
\]
Then $d\equiv 2, 5, 6~\mbox{or}~7 \pmod{8}$, and $d$ cannot be a square integer.
This is a contradiction.

\medskip

{\it The case of $\delta=1$}:\
We obtain $m_{4}=b_{1}b_{2}+\varepsilon b_{3}(b_{3}+1)$ and $m_{2}=b_{1}+b_{2}-\varepsilon$.
\begin{align*}
d':=(b_{1}-b_{2})^{2}
&
=(m_{2}+\varepsilon)^{2}-4\bigl(m_{4}-\varepsilon b_{3}(b_{3}+1)\bigr)\\
&
\equiv 1-4 \pmod{8}\\
&
\equiv 5 \pmod{8}.
\end{align*}
Then $d'$ cannot be a square integer.
This is a contradiction.
\end{proof}


\section{Proof of Theorem~\ref{thm1}}\label{sec:proof}

In this section, first we calculate the Conway polynomial (Proposition~\ref{prop:Conway}) and the Jones polynomial (Proposition~\ref{prop:Jones}) of the knot $K_{n}$ by using the skein relation.
Next, we provide a lemma (Lemma~\ref{lem1}) and then we prove Theorem~\ref{thm1}.

Let $J$ be the oriented link as illustrated in Figure~\ref{fig:link}.
It is obtained from the knot $K_{n}$ by smoothing one of $|2n-1|$ crossings in the rectangle in Figure~\ref{fig:ori-k_n}.
We note that it is equivalent to $L9a41\{1\}$ in LinkInfo table \cite{CL2}.
\begin{figure}[h]
 \centering
 \includegraphics{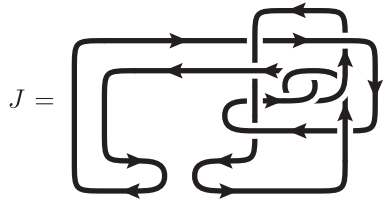}
 \caption{The link $J=L9a41\{1\}$}
 \label{fig:link}
\end{figure}
%

\subsection{The Conway polynomial of the knot $K_{n}$}\label{subsec:Conway}

The Conway polynomial $\nabla_{L}(z)\in\mathbb{Z}[z]$ of an oriented link $L$ is characterized by the following skein relation:
\[
{\rm (i)}~\nabla_{\rm unknot}(z)=1\qquad
{\rm (ii)}~\nabla_{L_{+}}(z)-\nabla_{L_{-}}(z)=z\nabla_{L_{0}}(z){\rm,}
\]
where three oriented links $L_{+},L_{-}$ and $L_{0}$ coincide except in the neighborhood of a point as illustrated in Figure~\ref{fig:skein}.
\begin{figure}[h]
 \centering
 \includegraphics{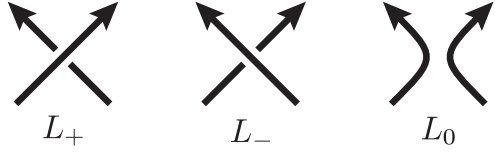}
 \caption{A skein triple $(L_{+},L_{-},L_{0})$}
 \label{fig:skein}
\end{figure}
We call $(L_{+},L_{-},L_{0})$ a skein triple.
We can calculate the Conway polynomial $\nabla_{K_{n}}(z)$ of the knot $K_{n}$ as follows.
\begin{prop}\label{prop:Conway}
The Conway polynomial $\nabla_{K_{n}}(z)$ of the knot $K_{n}$ is as follows:
\[
\nabla_{K_n}(z)=-(4n+1)z^4+2nz^2+1.
\]
\end{prop}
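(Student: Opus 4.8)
The plan is to apply the skein relation inside the twisted rectangle of Figure~\ref{fig:ori-k_n}, where the $|2n-1|$ half-twists live. First I would fix one crossing $c$ in this twist region and form the skein triple $(L_{+},L_{-},L_{0})$ at $c$. Since changing a single crossing in a twist region alters the number of half-twists by two, one of $L_{+},L_{-}$ is $K_{n}$ and the other is $K_{n-1}$; since the two strands running through the rectangle are antiparallel, the oriented smoothing $L_{0}$ caps off the twist region, so that the remaining crossings become Reidemeister-I kinks and disappear. Hence $L_{0}$ is the link $J$ of Figure~\ref{fig:link}, \emph{independently of $n$}. Feeding this into the skein relation (ii) yields the recursion
\[
\nabla_{K_{n}}(z)-\nabla_{K_{n-1}}(z)=z\,\nabla_{J}(z).
\]

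Next I would pin down the two ingredients of this recursion. For the base case I would use $K_{0}=\overline{3_{1}}\#4_{1}$ together with the multiplicativity of the Conway polynomial under connected sum: $\nabla_{K_{0}}(z)=\nabla_{\overline{3_{1}}}(z)\,\nabla_{4_{1}}(z)=(z^{2}+1)(1-z^{2})=1-z^{4}$. For the value $\nabla_{J}(z)$ I would again run the skein relation, this time on the fixed link $J=L9a41\{1\}$, resolving its crossings one at a time down to Hopf links and trivial split links whose Conway polynomials are known; the expected outcome is $\nabla_{J}(z)=2z-4z^{3}$, whose linear coefficient $2$ records $\mathrm{lk}(J)=2$.

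Finally, with the recursion in hand and $\nabla_{J}$ constant in $n$, I would conclude by telescoping (equivalently, induction on $n$ in both directions, the recursion being valid for every integer $n$):
\[
\nabla_{K_{n}}(z)=\nabla_{K_{0}}(z)+n\,z\,\nabla_{J}(z)=(1-z^{4})+n(2z^{2}-4z^{4})=-(4n+1)z^{4}+2nz^{2}+1.
\]

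The main obstacle I anticipate is the geometric bookkeeping rather than the algebra: one must verify carefully from the diagram that the oriented smoothing really does collapse the rectangle to the single fixed link $J$ for every $n$ (so that $\nabla_{J}$ is constant and the recursion telescopes cleanly), and that the crossing change produces exactly $K_{n-1}$. The separate computation of $\nabla_{J}$ is routine but must be carried out with consistent orientations, since an orientation slip would flip the sign of the $z\nabla_{J}$ term and corrupt the coefficients of the final polynomial.
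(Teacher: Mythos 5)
Your proposal is correct and follows essentially the same route as the paper: the skein triple $(K_{n},K_{n-1},J)$ with the fixed smoothed link $J=L9a41\{1\}$, the base case $\nabla_{K_{0}}(z)=\nabla_{\overline{3_{1}}}(z)\nabla_{4_{1}}(z)=1-z^{4}$, the value $\nabla_{J}(z)=-4z^{3}+2z$, and telescoping in both directions of $n$. The only difference is that you spell out the diagrammatic justification that the oriented smoothing collapses the twist region, which the paper leaves implicit.
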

\begin{proof}
{\it The case of $n=0$}:
Since $K_{0}=\overline{3_{1}}\#4_{1}$, 
\[
\nabla_{K_{0}}(z)=\nabla_{\overline{3_{1}}}(z)\nabla_{4_{1}}(z)=(z^{2}+1)(-z^{2}+1)=-z^{4}+1.
\]

\medskip
\noindent
{\it The case of $n>0$}:
The Conway polynomial of $J$ is $\nabla_{J}(z)=-4z^{3}+2z$.
By considering a skein triple $(K_{n},K_{n-1},J)$, the Conway polynomial of $K_{n}$ can be calculated inductively as follows:
\begin{align*}
\nabla_{K_{n}}(z)&=\nabla_{K_{n-1}}(z)+z\nabla_{J}(z)\\
                 &=(\nabla_{K_{n-2}}(z)+z\nabla_{J}(z))+z\nabla_{J}(z)\\
                 &\cdots\\
                 &=\nabla_{K_{0}}(z)+nz\nabla_{J}(z)\\
                 &=(-z^{4}+1)+nz(-4z^{3}+2z)=-(4n+1)z^{4}+2nz^{2}+1.
\end{align*}
Similarly, we can calculate it in the case of $n<0$.
\end{proof}
By Proposition~\ref{prop:Conway}, we see that all the knots $K_{n}$ are distinct one another.

\subsection{The Jones polynomial of $K_{n}$}\label{subsec:Jones}

The Jones polynomial $V_{L}(t) \in \mathbb{Z}[t^{\frac{1}{2}},t^{-\frac{1}{2}}]$ of an oriented link $L$ is characterized by the following skein relation:
\[
{\rm (i)}~V_{\rm unknot}(t)=1\qquad
{\rm (ii)}~t^{-1}V_{L_{+}}(t)-tV_{L_{-}}(t)=(t^{\frac{1}{2}}-t^{-\frac{1}{2}})V_{L_{0}}(t)
\]
for any skein triple $(L_{+},L_{-},L_{0})$.
We can calculate the Jones polynomial $V_{K_{n}}(t)$ of the knot $K_{n}$ as follows.
\begin{prop}\label{prop:Jones}
The Jones polynomial $V_{K_{n}}(t)$ of the knot $K_{n}$ is as follows:
\[
V_{K_{n}}(t)=\left\{
\begin{array}{ll}
t^{-1}-1+2t-3t^{2}+3t^{3}-2t^{4}+2t^{5}-t^{6}
&
(n=0)\\[2mm]
t^{2n}V_{K_{0}}(t)+\sigma t^{\sigma}(t^{\frac{1}{2}}-t^{-\frac{1}{2}})V_{J}(t)(1+t^{2\sigma}+\dots+t^{2(n-\sigma)})
&
(n\neq0)\text{,}
\end{array}
\right.
\]
where $\sigma:=\frac{\displaystyle n}{\displaystyle |n|}\in\{\pm1\}$ for $n\neq0$.
\end{prop}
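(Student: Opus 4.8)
The plan is to mirror the inductive skein computation of Proposition~\ref{prop:Conway}, adapting it to the asymmetric Jones skein relation. First I would dispose of the base case $n=0$. Since $K_{0}=\overline{3_{1}}\#4_{1}$ and the Jones polynomial is multiplicative under connected sum, we have
\[
V_{K_{0}}(t)=V_{\overline{3_{1}}}(t)\cdot V_{4_{1}}(t).
\]
Substituting the known values $V_{\overline{3_{1}}}(t)=-t^{4}+t^{3}+t$ and $V_{4_{1}}(t)=t^{-2}-t^{-1}+1-t+t^{2}$ and expanding the product yields exactly the stated degree-six polynomial $t^{-1}-1+2t-3t^{2}+3t^{3}-2t^{4}+2t^{5}-t^{6}$.

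Next I would set up the recurrence for $n>0$ using the \emph{same} skein triple $(K_{n},K_{n-1},J)$ as in the Conway computation, where $K_{n}=L_{+}$, $K_{n-1}=L_{-}$ and $J=L_{0}$. The Jones skein relation gives
\[
t^{-1}V_{K_{n}}(t)-tV_{K_{n-1}}(t)=(t^{\frac{1}{2}}-t^{-\frac{1}{2}})V_{J}(t),
\]
hence $V_{K_{n}}(t)=t^{2}V_{K_{n-1}}(t)+t(t^{\frac{1}{2}}-t^{-\frac{1}{2}})V_{J}(t)$. In contrast to the symmetric Conway relation, this recurrence carries the factor $t^{2}$, so iterating down to $K_{0}$ produces a geometric sum rather than an arithmetic one:
\[
V_{K_{n}}(t)=t^{2n}V_{K_{0}}(t)+t(t^{\frac{1}{2}}-t^{-\frac{1}{2}})V_{J}(t)\bigl(1+t^{2}+\dots+t^{2(n-1)}\bigr),
\]
which is precisely the $\sigma=1$ case of the claimed formula.

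For $n<0$ the crossings in the twist region carry the opposite sign, so the roles of $L_{+}$ and $L_{-}$ are interchanged: the relevant triple becomes $(K_{n+1},K_{n},J)$ with $K_{n+1}=L_{+}$ and $K_{n}=L_{-}$. Solving the skein relation for $V_{K_{n}}$ gives the backward recurrence
\[
V_{K_{n}}(t)=t^{-2}V_{K_{n+1}}(t)-t^{-1}(t^{\frac{1}{2}}-t^{-\frac{1}{2}})V_{J}(t),
\]
and iterating from $K_{0}$ down to $K_{n}$ yields the $\sigma=-1$ case, with leading term $t^{-2|n|}V_{K_{0}}(t)=t^{2n}V_{K_{0}}(t)$ and geometric sum $1+t^{-2}+\dots+t^{2(n+1)}$. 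Writing $\sigma=n/|n|$ then unifies both computations into the single displayed formula.

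The main obstacle is the orientation and sign bookkeeping. One must check that the two strands running through the rectangle are oriented so that switching a single crossing sends $K_{n}$ to $K_{n\mp1}$ (reducing the number of half twists by two) and that the oriented smoothing yields exactly $J$ in both signs of $n$; then one must track which knot occupies the $L_{+}$ slot and which the $L_{-}$ slot as the sign of the twists flips. This asymmetry of the Jones relation is exactly what forces the split into $\sigma=\pm1$ and accounts for the factor $t^{2n}$ and the geometric (rather than linear) dependence on $n$.
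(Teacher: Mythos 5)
Your proposal is correct and follows essentially the same route as the paper: the connected-sum factorization $V_{K_0}=V_{\overline{3_1}}V_{4_1}$ for the base case, the skein triple $(K_n,K_{n-1},J)$ giving the recurrence $V_{K_n}(t)=t^2V_{K_{n-1}}(t)+t(t^{1/2}-t^{-1/2})V_J(t)$, and iteration to the geometric sum. The only difference is that you spell out the backward recurrence for $n<0$, which the paper dismisses with ``similarly.''
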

\begin{proof}
{\it The case of $n=0$}:
Since $K_{0}=\overline{3_{1}}\#4_{1}$, 
\begin{align*}
V_{K_{0}}(t)=V_{\overline{3_{1}}}(t)V_{4_{1}}(t)&=(t+t^{3}-t^{4})(t^{-2}-t^{-1}+1-t+t^{2})\\
                                     &=t^{-1}-1+2t-3t^{2}+3t^{3}-2t^{4}+2t^{5}-t^{6}.
\end{align*}

\medskip
\noindent
{\it The case of $n>0$}: By considering a skein triple $(K_{n},K_{n-1},J)$, the Jones polynomial of $K_{n}$ can be calculated inductively as follows:
\begin{align*}
V_{K_{n}}(t)&=t^{2}V_{K_{n-1}}(t)+t(t^{\frac{1}{2}}-t^{-\frac{1}{2}})V_{J}(t)\\
            &=t^{2}\bigl(t^{2}V_{K_{n-2}}(t)+t(t^{\frac{1}{2}}-t^{-\frac{1}{2}})V_{J}(t)\bigr)+t(t^{\frac{1}{2}}-t^{-\frac{1}{2}})V_{J}(t)\\
            &\cdots\\
            &=t^{2n}V_{K_{0}}(t)+t(t^{\frac{1}{2}}-t^{-\frac{1}{2}})V_{J}(t)(1+t^{2}+\dots+t^{2(n-1)}).
\end{align*}
Similarly, we can calculate it in the case of $n<0$.
\end{proof}


\subsection{Proof of Theorem~\ref{thm1}}\label{subsec:proof}

First, we prove the following lemma.
\begin{lem}\label{lem1}
For the knot $K_{n}$ $(n\in\mathbb{Z})$, we have the following:
\begin{enumerate}
\item[(1)] $g(K_{n})=2$.
\item[(2)] $u(K_{n})\leq2$.
\item[(3)] $2 \leq c(K_{n}) \leq 4$. In particular, $c(K_{n})\ge 3$ for odd $n$.
\item[(4)] $K_{n}$ is prime for $n\neq0, -4$.
\end{enumerate}
\end{lem}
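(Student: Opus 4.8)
The plan is to prove the four claims of Lemma~\ref{lem1} largely independently, with the Conway polynomial computed in Proposition~\ref{prop:Conway} serving as the workhorse for several of them. For part (1), the genus, I would exhibit an explicit Seifert surface of genus two for $K_n$. The construction from Figure~\ref{fig:ori-k_n} should give a natural spanning surface (for instance via Seifert's algorithm applied to the displayed diagram, or by the plumbing description used in Lemma~\ref{lem:Conway}); this gives the upper bound $g(K_n)\le 2$. For the matching lower bound, the cleanest route is the degree of the Conway polynomial: since $\nabla_{K_n}(z)=-(4n+1)z^4+2nz^2+1$ has $z$-degree $4$ whenever $4n+1\neq 0$ (which holds for all integers $n$), and $2\deg_z\nabla_K \le 2g(K)$, we get $g(K_n)\ge 2$. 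Hence $g(K_n)=2$.

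For part (2), the unknotting number bound $u(K_n)\le 2$, I would find two crossings in a diagram of $K_n$ whose simultaneous change yields the unknot. The knot $K_0=\overline{3_1}\#4_1$ has unknotting number $2$ (one crossing change unknots each summand), and the skein relation $(K_n,K_{n-1},J)$ suggests that changing crossings inside the twist region moves $K_n$ to $K_{n-1}$, so I would locate an unknotting sequence that is uniform in $n$; one expects a single crossing change in the rectangle to collapse the twists to the $n=0$ (or some small) case, after which the two summand-crossings finish the job. The main point is to verify that the total number of changes can be capped at $2$ regardless of $n$, presumably by undoing all the half-twists at once with one crossing change in the band and then unknotting the remaining small knot with one more.

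Part (3) is where the real content sits. The upper bound $c(K_n)\le 4$ follows from a clasp disk construction: the genus-two surface $F$ built by plumbing Hopf bands can be converted back into a clasp disk, so $c(K_n)\le 2g(K_n)\le 4$ (more generally $c\le 2g$ in this setup). The lower bound $c(K_n)\ge 2$ follows from $c(K_n)\ge g(K_n)=2$ via Shibuya's inequality $g\le c$. The interesting assertion is that $c(K_n)\ge 3$ for odd $n$, and this is exactly what Proposition~\ref{prop:suff} is designed to deliver. I would apply it with $m_4=-(4n+1)$ and $m_2=2n$: for $n$ odd one checks $m_2=2n\equiv 2\pmod 4$, and $m_4=-(4n+1)\equiv -(4n+1)\pmod 8$, so with $n$ odd $4n\equiv 4\pmod 8$ giving $4n+1\equiv 5$ and $m_4\equiv -5\equiv 3\pmod 8$. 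Both congruences of Proposition~\ref{prop:suff} hold, hence $c(K_n)\ge 3$.

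For part (4), primeness for $n\neq 0,-4$, I would appeal to the structure of $K_n$ as a closed surgery/tangle and detect irreducibility of its complement, or more practically use a polynomial obstruction to splitting as a connected sum. The Conway polynomial factors as a product for connected sums, so if $K_n=K'\#K''$ nontrivially then $\nabla_{K_n}=\nabla_{K'}\nabla_{K''}$ with each factor of even degree; one can then ask which $n$ admit a factorization of $-(4n+1)z^4+2nz^2+1$ into two degree-two Conway polynomials over the integers, and the excluded values $n=0,-4$ should be precisely those where such a factorization (forcing a composite knot) is unobstructed. I expect this polynomial/factorization argument to be the main obstacle, since the Conway polynomial alone cannot certify primeness; a more robust approach would be geometric, showing the complement is irreducible and atoroidal (e.g.\ by recognizing $K_n$ as obtained from the hyperbolic link $J=L9a41\{1\}$ by twisting and invoking a standard incompressibility argument for the annulus carrying the twist region), which is where I would expect to spend the most effort.
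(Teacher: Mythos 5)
Your parts (1), (2) and the lower bounds in (3) follow the paper: a genus-two Seifert surface plus the degree of $\nabla_{K_n}$ for (1), two explicit crossing changes for (2), and Proposition~\ref{prop:suff} applied with $m_4=-(4n+1)\equiv 3\pmod 8$ and $m_2=2n\equiv 2\pmod 4$ for $c(K_n)\ge 3$. However, there are two genuine gaps.

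First, your justification of $c(K_n)\le 4$ fails. The inequality $c(K)\le 2g(K)$ is not a theorem, and it is false in general: the paper's own appendix contains $9_{35}$ with $g=1$ and $c=3$. Nor can you ``convert the plumbed surface back'': reversing the operation $(*)$ turns a surface that is a plumbing of Hopf bands over a clasp disk into a clasp disk with \emph{one clasp per Hopf band}, so if the genus-two surface of $K_n$ had that structure you would get $c(K_n)\le 2$, contradicting your own bound $c(K_n)\ge 3$ for odd $n$. The paper instead exhibits an explicit clasp disk of $K_n$ with four clasp singularities (Figure~\ref{fig:c=4}); some such direct construction is required, and your proposal does not supply one.

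Second, part (4) is left incomplete, and you abandon the correct argument just before it closes. You rightly observe that the Conway polynomial alone cannot certify primeness (a summand with trivial Alexander polynomial would be invisible in the factorization), but combined with part (1) it does suffice: if $K_n=K'\# K''$ nontrivially, additivity of the genus together with $g(K_n)=2$ forces $g(K')=g(K'')=1$, hence $\nabla_{K'}=pz^2+1$ and $\nabla_{K''}=qz^2+1$ for integers $p,q$. Comparing coefficients gives $pq=-(4n+1)$ and $p+q=2n$, so $p,q$ are integral roots of $x^2-2nx-(4n+1)=0$, which requires $n^2+4n+1=s^2$ for some non-negative integer $s$; from $(n+2+s)(n+2-s)=3$ one gets $s=1$ and $n\in\{0,-4\}$. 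No irreducibility, atoroidality, or hyperbolicity argument is needed, and the Diophantine step you deferred is exactly the content of the paper's proof.
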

\begin{proof}
(1)~Since we can obtain a genus two Seifert surface of $K_{n}$ as illustrated in Figure~\ref{fig:Seifert_surface}, we have $g(K_{n})\leq2$.
Since the degree of $\nabla_{K_{n}}(z)$ is four (Proposition~\ref{prop:Conway}), we have $g(K_{n})\geq2$.
Therefore we obtain $g(K_{n})=2$.

\medskip
\noindent
(2)~Since we can unknot the knot $K_{n}$ by crossing changes at dotted circles in Figure~\ref{fig:ori-k_n}, we have $u(K_{n})\leq2$.

\medskip
\noindent
(3)~By (1) we have $c(K_{n})\geq2$, and by Figure~\ref{fig:c=4} we have $c(K_{n})\leq4$.
Suppose that $n$ is odd.
Since $\nabla_{K_{n}}(z)=-(4n+1)z^{4}+2nz^{2}+1$, it implies that $-(4n+1)\equiv3 \pmod{8}$ and $2n\equiv2 \pmod{4}$.
Therefore by Proposition~\ref{prop:suff} we obtain $c(K_{n})\geq3$.

\medskip
\noindent
(4)~Suppose that $K_{n}$ is a composite knot.
By (1), $K_{n}$ is decomposed into two genus one knots.
Since the degree of the Conway polynomial $\nabla_{K_{n}}(z)$ of $K_{n}$ is four, $\nabla_{K_{n}}(z)$ is decomposed into two Conway polynomials with degree two, that is, $\nabla_{K_{n}}(z)=(pz^{2}+1)(qz^{2}+1)$ for some integers $p$ and $q$.
Therefore we have
\[
pq=-(4n+1)\quad
\mbox{and}\quad
p+q=2n.
\]
Then $p$ and $q$ are integral roots of the equation $x^{2}-2nx-(4n+1)=0$, and hence the discriminant of the equation is a square integer.
Hence there is a non-negative integer $s$ such that $n^{2}+4n+1=s^{2}$.
Since it implies that $(n+2+s)(n+2-s)=3$, we have $s=1$, and $n=0$ and $-4$.

\end{proof}
\begin{figure}[h]
\begin{minipage}{75mm}
 \centering
 \includegraphics{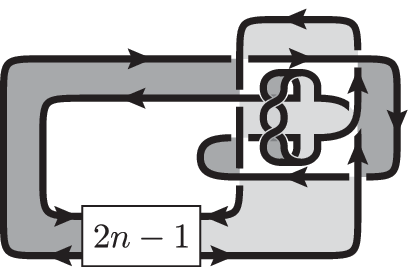}
 \caption{A Seifert surface of $K_{n}$}
 \label{fig:Seifert_surface}
\end{minipage}
\begin{minipage}{75mm}
 \centering
 \includegraphics{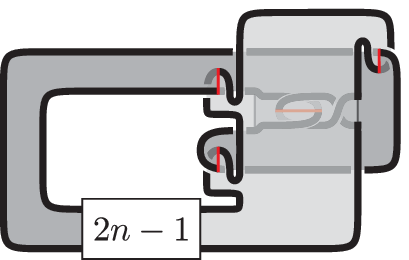}
 \caption{A clasp disk of $K_{n}$}
 \label{fig:c=4}
\end{minipage}
\end{figure}
\begin{rem}
(1)~On $u(K_{n})$, by using the Nakanishi index (cf.\ \cite{Kw1}) reduced modulo $2$, we can see that $u(K_{n})=2$ for even $n$.
The authors conjecture that $u(K_{n})=2$ for all $n$.

(2)~On $c(K_{n})$, we see that $c(K_{0})=2$ and $c(K_{1})=3$.
The authors conjecture that $c(K_{n})=4$ for any integer $n$ except $0~\mbox{and}~1$.

(3)~On the primeness of $K_{n}$, the authors conjecture that $K_{-4}$ is also prime.
\end{rem}

Now, we prove Theorem~\ref{thm1}.

\begin{proof}[Proof of Theorem~\ref{thm1}]
Suppose that $n$ is odd.
By Lemma~\ref{lem1}, the knot $K_{n}$ is prime and we have $\max\{g(K_{n}),u(K_{n})\}<c(K_{n})$.

\end{proof}

We show that Question~\ref{que2} is affirmative for the knots $K_{n}$.
\begin{lem}\label{lem2}
For the knots $K_{n}$ $(n\in\mathbb{Z})$, $cr(K_{n})=2n+8$ for positive $n$, $cr(K_{0})=7$, and $-2n+7\le cr(K_{n})\le -2n+10$ for negative $n$.
\end{lem}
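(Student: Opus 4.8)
The plan is to read off the crossing-number bounds directly from the diagrams in Figures~\ref{fig:ori-k_n}, \ref{fig:k_0}, and \ref{fig:k_1}, combining an upper bound from an explicit diagram with a lower bound coming from an invariant, since $cr(K)$ is by definition the minimum over all diagrams and a single diagram only gives an inequality. First I would treat the generic case $n\neq 0$. The standard diagram of $K_n$ in Figure~\ref{fig:ori-k_n} has the $|2n-1|$ crossings in the twist rectangle together with a fixed number of crossings coming from the rest of the tangle; counting these carefully in the displayed diagram should give the claimed upper bounds $cr(K_n)\le 2n+8$ for $n>0$ and $cr(K_n)\le -2n+10$ for $n<0$. (The asymmetry between $+8$ and $+10$ reflects that for negative $n$ the twist region has $|2n-1|=-2n+1$ crossings while for positive $n$ it has $2n-1$, so the two counts differ by the constant part of the diagram.) For $n>0$ I would expect the diagram to be alternating, which is the crucial simplification.

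The heart of the argument is the lower bound, and this is the step I expect to be the main obstacle. For $n>0$, if the standard diagram is alternating and reduced, then by the Kauffman–Murasugi–Thistlethwaite theorem its crossing number is realized, so $cr(K_n)=2n+8$ exactly; this pins down the positive case with equality. Alternatively, one can bound $cr(K_n)$ below using the span (breadth) of the Jones polynomial $V_{K_n}(t)$, which for any link satisfies $\operatorname{span}V_L(t)\le cr(L)$, with equality for reduced alternating diagrams. Here I would invoke Proposition~\ref{prop:Jones}: the explicit recursion $V_{K_n}(t)=t^{2n}V_{K_0}(t)+\sigma t^{\sigma}(t^{1/2}-t^{-1/2})V_J(t)(1+t^{2\sigma}+\dots+t^{2(n-\sigma)})$ lets me compute the maximal and minimal degrees of $V_{K_n}$ as explicit linear functions of $n$, and hence compute $\operatorname{span}V_{K_n}$. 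Matching this span against the diagrammatic crossing count should deliver the lower bound $2n+7\le cr(K_n)$ (equivalently $-2n+7\le cr(K_n)$ for $n<0$), which is exactly the lower endpoint in the claimed range for negative $n$.

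For the two exceptional small cases I would argue by identification: $K_0=\overline{3_1}\#4_1$, a connected sum of the trefoil and the figure-eight, so $cr(K_0)=cr(3_1)+cr(4_1)=3+4=7$ by additivity of the crossing number under connected sum (Diao's theorem, or alternatively the classical computation for alternating knots), giving the stated value. Likewise $K_1=10_{97}$ has crossing number $10=2\cdot 1+8$, consistent with the positive formula, which I would note as a consistency check rather than a separate case. The overall structure is therefore: (i) count crossings in the standard diagram for the upper bounds; (ii) use alternation plus Jones-polynomial span from Proposition~\ref{prop:Jones} for the lower bounds, obtaining equality $cr(K_n)=2n+8$ when $n>0$ and the two-sided estimate for $n<0$; (iii) dispose of $n=0$ by the connected-sum computation. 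The delicate point throughout is verifying that the positive-$n$ diagram really is reduced and alternating so that the span bound is sharp; if it is not alternating for negative $n$, that is precisely why only a range, rather than equality, can be asserted there.
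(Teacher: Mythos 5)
Your proposal follows essentially the same route as the paper: for $n>0$ (and $n=0$) the paper invokes Murasugi's theorem on reduced alternating diagrams to get $cr(K_n)=2n+8$ and $cr(K_0)=7$, and for $n<0$ it combines the Jones polynomial span (computed from Proposition~\ref{prop:Jones}) as a lower bound $-2n+7$ with the obvious diagrammatic upper bound $-2n+10$. Your plan matches this in all three cases, so it is correct and not a genuinely different argument.
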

\begin{proof}
{\it The case of $n>0$}:
Since we can see that $K_{n}$ has a reduced alternating diagram (see Figure~\ref{fig:ori-k_n}), by \cite[Theorem 13.5]{Mu} we have $cr(K_{n})=2n+8$.

\smallskip
{\it The case of $n=0$}:
Since $K_{0}=\overline{3_{1}}\#4_{1}$, by \cite[Theorem 13.5]{Mu} we have $cr(K_{0})=7$.

\smallskip
{\it The case of $n<0$}:
It is known that the degree of the Jones polynomial of a knot is a lower bound of the crossing number of the knot.
The Jones polynomial of $K_{n}$ is as follows:
\[
V_{K_{n}}(t)=t^{2n}V_{K_{0}}(t)-t^{-1}(t^{\frac{1}{2}}-t^{-\frac{1}{2}})V_{J}(t)(1+t^{-2}+\dots+t^{2(n+1)}),
\]
where
\[
V_{K_{0}}(t)=t^{-1}-1+2t-3t^{2}+3t^{3}-2t^{4}+2t^{5}-t^{6}
\]
and
\[
V_{J}(t)=-t^{-\frac{3}{2}}+2t^{-\frac{1}{2}}-4t^{\frac{1}{2}}+6t^{\frac{3}{2}}-6t^{\frac{5}{2}}+5t^{\frac{7}{2}}-6t^{\frac{9}{2}}+3t^{\frac{11}{2}}-2t^{\frac{13}{2}}+t^{\frac{15}{2}}.
\]
Therefore we have $cr(K_{n})\geq7-2n$.
On the other hand, it is easy to see that $cr(K_{n})\leq-2n+10$.
\end{proof}
By Lemma~\ref{lem1} (3) and Lemma~\ref{lem2}, we have the following.
\begin{prop}\label{prop:que2}
For the knots $K_{n}$ $(n\in\mathbb{Z})$, the following holds.
\[
c(K_{n})\leq \left[\frac{cr(K_{n})-1}{2}\right].
\]
\end{prop}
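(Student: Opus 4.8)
The plan is to verify the inequality by splitting into the same three regimes of $n$ used in Lemma~\ref{lem2}, and in each regime to combine the crossing-number information from Lemma~\ref{lem2} with the upper bound $c(K_n)\le 4$ supplied by Lemma~\ref{lem1}~(3). The elementary fact driving every case is that if $x\ge m$ with $m\in\mathbb{Z}$, then $[x]\ge m$; consequently a lower bound for $cr(K_n)$ that is an odd integer produces an integer lower bound for $[(cr(K_n)-1)/2]$, which I then compare against $4$. Thus the strategy is to show in each regime that $[(cr(K_n)-1)/2]$ is at least $c(K_n)$, in most cases via the uniform chain $[(cr(K_n)-1)/2]\ge 4\ge c(K_n)$.

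First I would treat $n>0$. Here Lemma~\ref{lem2} gives $cr(K_n)=2n+8$, so $[(cr(K_n)-1)/2]=[(2n+7)/2]=n+3$; since $n\ge 1$ this is at least $4$, and $c(K_n)\le 4$ by Lemma~\ref{lem1}~(3). Next, for $n<0$ Lemma~\ref{lem2} gives $cr(K_n)\ge -2n+7$, whence $cr(K_n)-1\ge -2n+6$ and therefore $[(cr(K_n)-1)/2]\ge -n+3$ by the observation above (the right-hand side being an integer). As $-n\ge 1$ in this range, this is again at least $4\ge c(K_n)$. So both nonzero regimes reduce to the single numerical inequality $[(cr(K_n)-1)/2]\ge 4\ge c(K_n)$.

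The only case requiring extra care, and the one I regard as the sole obstacle, is $n=0$. Then $cr(K_0)=7$ gives $[(cr(K_0)-1)/2]=3$, so the crude bound $c(K_0)\le 4$ is too weak by one. Here I would instead invoke the sharper value $c(K_0)=2$ recorded in the Remark (which follows from the subadditivity of the clasp number under connected sum together with $c(\overline{3_1})=c(4_1)=1$, so that $c(K_0)\le 1+1=2$, with equality by $g(K_0)=2$). Since $2\le 3$, the inequality holds in this case as well, and all three cases together establish the proposition. Apart from this one point, the argument is essentially bookkeeping with the floor function, so I anticipate no genuine difficulty beyond remembering not to lean on Lemma~\ref{lem1}~(3) alone when $n=0$.
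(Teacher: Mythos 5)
Your proof is correct and follows essentially the same route as the paper, whose entire justification is the one line ``By Lemma~\ref{lem1}~(3) and Lemma~\ref{lem2}'' applied case by case in $n$. Your extra care at $n=0$ is in fact necessary rather than optional: there $\bigl[\frac{cr(K_0)-1}{2}\bigr]=3$ while Lemma~\ref{lem1}~(3) only yields $c(K_0)\le 4$, so one genuinely needs the sharper value $c(K_0)=2$ (obtained, as you say, from $K_0=\overline{3_1}\#4_1$, subadditivity of the clasp number under connected sum, and $g(K_0)=2$; the paper records this only in the Remark), a point the paper's citation of those two lemmas alone does not cover.
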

This is a supporting evidence that the question is affirmative.


\section*{Appendix}

In the table below, $g$ is the genus, $u$ is the unknotting number, $c$ is the clasp number, and $X=2$ or 3 (cf.\ \cite{Kw1, Kw2, Ro}).
The clasp numbers of torus knots (\cite{GHY,Mo2}) and doubled knots (cf.\ \cite{Ko2,ST}) were determined. 
We refer the newest information of the unknotting numbers mainly from KnotInfo \cite{CL1}.
In the table, we are using the notation of prime knots in Rolfsen's table \cite{Ro}.
(We note that 
(1)~$10_{161}$ is equivalent to $10_{162}$ in Rolfsen's table and Kawauchi's table \cite{Kw1, Kw2}, 
(2)~in Kawauchi's table, $10_{83}$ and $10_{86}$ are interchanged from Rolfsen's table, and
(3)~Knot Atlas \cite{BM} and KnotInfo follow basically Rolfsen's table, but they remove $10_{162}$ in Rolfsen's table and denote $10_{n}$~$(163\le n\le 166)$ in Rolfsen's table by $10_{n-1}$.)


\section*{Acknowledgements}
The authors would like to thank S.~Kamada, H.~Matsuda, K.~Morimoto, K.~Taniyama and I.~Tayama for their helpful comments.

\newpage

\begin{minipage}[t]{4.5cm}
 \begin{center}
  \begin{tabular}{|c@{\quad\vrule width0.8pt\quad}c|c|c|c|}
    \hline
  knot & $g$ & $u$ & $c$ \\
  \noalign{\hrule height 0.8pt}
  $3_1$ & 1 & 1  & 1 \\
    \hline
  $4_1$ & 1 & 1  & 1 \\
    \hline
  $5_1$ & 2 & 2  & 2 \\
    \hline
  $5_2$ & 1 & 1  & 1 \\
    \hline
  $6_1$ & 1 & 1  & 1 \\
    \hline
  $6_2$ & 2 & 1  & 2 \\
    \hline
  $6_3$ & 2 & 1  & 2 \\
    \hline
  $7_1$ & 3 & 3  & 3 \\
    \hline
  $7_2$ & 1 & 1  & 1 \\
    \hline
  $7_3$ & 2 & 2  & 2 \\
    \hline
  $7_4$ & 1 & 2  & 2 \\
    \hline
  $7_5$ & 2 & 2  & 2 \\
    \hline
  $7_6$ & 2 & 1  & 2 \\
    \hline
  $7_7$ & 2 & 1  & 2 \\
    \hline
  $8_1$ & 1 & 1  & 1 \\
    \hline
  $8_2$ & 3 & 2 & 3 \\
    \hline
  $8_3$ & 1 & 2 & 2 \\
    \hline
  $8_4$ & 2 & 2 & 2 \\
    \hline
  $8_5$ & 3 & 2 & 3 \\
    \hline
  $8_6$ & 2 & 2 & 2 \\
    \hline
  $8_7$ & 3 & 1 & 3 \\
    \hline
  $8_8$ & 2 & 2 & 2 \\
    \hline
  $8_9$ & 3 & 1 & 3 \\
    \hline
  $8_{10}$ & 3 & 2 & 3 \\
    \hline
  $8_{11}$ & 2 & 1 & 2 \\
    \hline
  $8_{12}$ & 2 & 2 & 2 \\
    \hline
  $8_{13}$ & 2 & 1 & 2 \\
    \hline
  $8_{14}$ & 2 & 1 & 2 \\
    \hline
  $8_{15}$ & 2 & 2 & 2 \\
    \hline
  $8_{16}$ & 3 & 2 & 3 \\
    \hline
  $8_{17}$ & 3 & 1 & 3 \\
    \hline
  $8_{18}$ & 3 & 2 & 3 \\
    \hline
  $8_{19}$ & 3 & 3 & 3 \\
    \hline
  $8_{20}$ & 2 & 1 & 2 \\
    \hline
  $8_{21}$ & 2 & 1 & 2 \\
    \hline
  $9_1$ & 4 & 4 & 4 \\
    \hline
  $9_2$ & 1 & 1 & 1 \\
    \hline
  $9_3$ & 3 & 3 & 3 \\
    \hline
  $9_4$ & 2 & 2 & 2 \\
    \hline
  $9_5$ & 1 & 2 & 2 \\
    \hline
  $9_6$ & 3 & 3 & 3 \\
    \hline
  $9_7$ & 2 & 2 & 2 \\
    \hline
  \end{tabular}
 \end{center}
\end{minipage}
\hspace{0.5cm}
\begin{minipage}[t]{4.5cm}
 \begin{center}
  \begin{tabular}{|c@{\quad\vrule width0.8pt\quad}c|c|c|c|}
    \hline
  knot & $g$ & $u$ & $c$ \\
  \noalign{\hrule height 0.8pt}
  $9_8$ & 2 & 2 & 2 \\
    \hline
  $9_9$ & 3 & 3 & 3 \\
    \hline
  $9_{10}$ & 2 & 3 & 3 \\
    \hline
  $9_{11}$ & 3 & 2 & 3 \\
    \hline
  $9_{12}$ & 2 & 1 & 2 \\
    \hline
  $9_{13}$ & 2 & 3 & 3 \\
    \hline
  $9_{14}$ & 2 & 1 & 2 \\
    \hline
  $9_{15}$ & 2 & 2 & 2 \\
    \hline
  $9_{16}$ & 3 & 3 & 3 \\
    \hline
  $9_{17}$ & 3 & 2 & 3 \\
    \hline
  $9_{18}$ & 2 & 2 & 2 \\
    \hline
  $9_{19}$ & 2 & 1 & 2 \\
    \hline
  $9_{20}$ & 3 & 2 & 3 \\
    \hline
  $9_{21}$ & 2 & 1 & 2 \\
    \hline
  $9_{22}$ & 3 & 1 & 3 \\
    \hline
  $9_{23}$ & 2 & 2 & 2 \\
    \hline
  $9_{24}$ & 3 & 1 & 3 \\
    \hline
  $9_{25}$ & 2 & 2 & 2 \\
    \hline
  $9_{26}$ & 3 & 1 & 3 \\
    \hline
  $9_{27}$ & 3 & 1 & 3 \\
    \hline
  $9_{28}$ & 3 & 1 & 3 \\
    \hline
  $9_{29}$ & 3 & 2 & 3 \\
    \hline
  $9_{30}$ & 3 & 1 & 3 \\
    \hline
  $9_{31}$ & 3 & 2 & 3 \\
    \hline
  $9_{32}$ & 3 & 2 & 3 \\
    \hline
  $9_{33}$ & 3 & 1 & 3 \\
    \hline
  $9_{34}$ & 3 & 1 & 3 \\
    \hline
  $9_{35}$  & 1 & 3 & 3 \\
    \hline
  $9_{36}$ & 3 & 2 & 3 \\
    \hline
  $9_{37}$ & 2 & 2 & 2 \\
    \hline
  $9_{38}$ & 2 & 3 & 3 \\
    \hline
  $9_{39}$ & 2 & 1 & $X$ \\
    \hline
  $9_{40}$ & 3 & 2 & 3 \\
    \hline
  $9_{41}$ & 2 & 2 & $X$ \\
    \hline
  $9_{42}$ & 2 & 1 & 2 \\
    \hline
  $9_{43}$ & 3 & 2 & 3 \\
    \hline
  $9_{44}$ & 2 & 1 & 2 \\
    \hline
  $9_{45}$ & 2 & 1 & 2 \\
    \hline
  $9_{46}$ & 1 & 2 & 2 \\
    \hline
  $9_{47}$ & 3 & 2 & 3 \\
    \hline
  $9_{48}$ & 2 & 2 & 2 \\
    \hline
  $9_{49}$ & 2 & 3 & 3 \\
    \hline
  \end{tabular}
 \end{center}
\end{minipage}
\hspace{0.5cm}
\begin{minipage}[t]{4.5cm}
 \begin{center}
  \begin{tabular}{|c@{\quad\vrule width0.8pt\quad}c|c|c|c|}
    \hline
  knot & $g$ & $u$ & $c$ \\
  \noalign{\hrule height 0.8pt}
  $10_1$ & 1 & 1 & 1 \\
    \hline
  $10_2$ & 4 & 3 & 4 \\
    \hline
  $10_3$ & 1 & 2 & 2 \\
    \hline
  $10_4$ & 2 & 2 & 2 \\
    \hline
  $10_5$ & 4 & 2 & 4 \\
    \hline
  $10_6$ & 3 & 3 & 3 \\
    \hline
  $10_7$ & 2 & 1 & 2 \\
    \hline
  $10_8$ & 3 & 2 & 3 \\
    \hline
  $10_9$ & 4 & 1 & 4 \\
    \hline
  $10_{10}$ & 2 & 1 & 2 \\
    \hline
  $10_{11}$ & 2 & $X$ & $X$ \\
    \hline
  $10_{12}$ & 3 & 2 & 3 \\
    \hline
  $10_{13}$ & 2 & 2 & 2 \\
    \hline
  $10_{14}$ & 3 & 2 & 3 \\
    \hline
  $10_{15}$ & 3 & 2 & 3 \\
    \hline
  $10_{16}$ & 2 & 2 & $X$ \\
    \hline
  $10_{17}$ & 4 & 1 & 4 \\
    \hline
  $10_{18}$ & 2 & 1 & 2 \\
    \hline
  $10_{19}$ & 3 & 2 & 3 \\
    \hline
  $10_{20}$ & 2 & 2 & 2 \\
    \hline
  $10_{21}$ & 3 & 2 & 3 \\
    \hline
  $10_{22}$ & 3 & 2 & 3 \\
    \hline
  $10_{23}$ & 3 & 1 & 3 \\
    \hline
  $10_{24}$ & 2 & 2 & 2 \\
    \hline
  $10_{25}$ & 3 & 2 & 3 \\
    \hline
  $10_{26}$ & 3 & 1 & 3 \\
    \hline
  $10_{27}$ & 3 & 1 & 3 \\
    \hline
  $10_{28}$ & 2 & 2 & $X$ \\
    \hline
  $10_{29}$ & 3 & 2 & 3 \\
    \hline
  $10_{30}$ & 2 & 1 & $X$ \\
    \hline
  $10_{31}$ & 2 & 1 & 2 \\
    \hline
  $10_{32}$ & 3 & 1 & 3 \\
    \hline
  $10_{33}$ & 2 & 1 & $X$ \\
    \hline
  $10_{34}$ & 2 & 2 & 2 \\
    \hline
  $10_{35}$ & 2 & 2 & 2 \\
    \hline
  $10_{36}$ & 2 & 2 & 2 \\
    \hline
  $10_{37}$ & 2 & 2 & 2 \\
    \hline
  $10_{38}$ & 2 & 2 & 2 \\
    \hline
  $10_{39}$ & 3 & 2 & 3 \\
    \hline
  $10_{40}$ & 3 & 2 & 3 \\
    \hline
  $10_{41}$ & 3 & 2 & 3 \\
    \hline
  $10_{42}$ & 3 & 1 & 3 \\
    \hline
  \end{tabular}
 \end{center}
\end{minipage}

\newpage

\begin{minipage}[t]{4.5cm}
 \begin{center}
  \begin{tabular}{|c@{\quad\vrule width0.8pt\quad}c|c|c|c|}
    \hline
  knot & $g$ & $u$  & $c$ \\
  \noalign{\hrule height 0.8pt}
  $10_{43}$ & 3 & 2 & 3 \\
    \hline
  $10_{44}$ & 3 & 1 & 3 \\
    \hline
  $10_{45}$ & 3 & 2 & 3 \\
    \hline
  $10_{46}$ & 4 & 3 & 4 \\
    \hline
  $10_{47}$ & 4 & $X$ & 4 \\
    \hline
  $10_{48}$ & 4 & 2 & 4 \\
    \hline
  $10_{49}$ & 3 & 3 & 3 \\
    \hline
  $10_{50}$ & 3 & 2 & 3 \\
    \hline
  $10_{51}$ & 3 & $X$ & 3 \\
    \hline
  $10_{52}$ & 3 & 2 & 3 \\
    \hline
  $10_{53}$ & 2 & 3 & 3 \\
    \hline
  $10_{54}$ & 3 & $X$ & 3 \\
    \hline
  $10_{55}$ & 2 & 2 & 2 \\
    \hline
  $10_{56}$ & 3 & 2 & 3 \\
    \hline
  $10_{57}$ & 3 & 2 & 3 \\
    \hline
  $10_{58}$ & 2 & 2 & 2 \\
    \hline
  $10_{59}$ & 3 & 1 & 3 \\
    \hline
  $10_{60}$ & 3 & 1 & 3 \\
    \hline
  $10_{61}$ & 3 & $X$ & 3 \\
    \hline
  $10_{62}$ & 4 & 2 & 4 \\
    \hline
  $10_{63}$ & 2 & 2 & 2 \\
    \hline
  $10_{64}$ & 4 & 2 & 4 \\
    \hline
  $10_{65}$ & 3 & 2 & 3 \\
    \hline
  $10_{66}$ & 3 & 3 & 3 \\
    \hline
  $10_{67}$ & 2 & 2 & 2 \\
    \hline
  $10_{68}$ & 2 & 2 & $X$ \\
    \hline
  $10_{69}$ & 3 & 2 & 3 \\
    \hline
  $10_{70}$ & 3 & 2 & 3 \\
    \hline
  $10_{71}$ & 3 & 1 & 3 \\
    \hline
  $10_{72}$ & 3 & 2 & 3 \\
    \hline
  $10_{73}$ & 3 & 1 & 3 \\
    \hline
  $10_{74}$ & 2 & 2 & $X$ \\
    \hline
  $10_{75}$ & 3 & 2 & 3 \\
    \hline
  $10_{76}$ & 3 & $X$ & 3 \\
    \hline
  $10_{77}$ & 3 & $X$ & 3 \\
    \hline
  $10_{78}$ & 3 & 2 & 3 \\
    \hline
  $10_{79}$ & 4 & $X$ & 4 \\
    \hline
  $10_{80}$ & 3 & 3 & 3 \\
    \hline
  $10_{81}$ & 3 & 2 & 3 \\
    \hline
  $10_{82}$ & 4 & 1 & 4 \\
    \hline
  $10_{83}$ & 3 & 2 & 3 \\
    \hline
  $10_{84}$ & 3 & 1 & 3 \\
    \hline
  \end{tabular}
 \end{center}
\end{minipage}
\hspace{0.5cm}
\begin{minipage}[t]{4.5cm}
 \begin{center}
  \begin{tabular}{|c@{\quad\vrule width0.8pt\quad}c|c|c|c|}
    \hline
  knot & $g$ & $u$ & $c$ \\
  \noalign{\hrule height 0.8pt}
  $10_{85}$ & 4 & 2 & 4 \\
    \hline
  $10_{86}$ & 3 & 2 & 3 \\
    \hline
  $10_{87}$ & 3 & 2 & 3 \\
    \hline
  $10_{88}$ & 3 & 1 & 3 \\
    \hline
  $10_{89}$ & 3 & 2 & 3 \\
    \hline
  $10_{90}$ & 3 & 2 & 3 \\
    \hline
  $10_{91}$ & 4 & 1 & 4 \\
    \hline
  $10_{92}$ & 3 & 2 & 3 \\
    \hline
  $10_{93}$ & 3 & 2 & 3 \\
    \hline
  $10_{94}$ & 4 & 2 & 4 \\
    \hline
  $10_{95}$ & 3 & 1 & 3 \\
    \hline
  $10_{96}$ & 3 & 2 & 3 \\
    \hline
  $10_{97}$ & 2 & 2 & 3 \\
    \hline
  $10_{98}$ & 3 & 2 & 3 \\
    \hline
  $10_{99}$ & 4 & 2 & 4 \\
    \hline
  $10_{100}$ & 4 & $X$ & 4 \\
    \hline
  $10_{101}$ & 2 & 3 & 3 \\
    \hline
  $10_{102}$ & 3 & 1 & 3 \\
    \hline
  $10_{103}$ & 3 & 3 & 3 \\
    \hline
  $10_{104}$ & 4 & 1 & 4 \\
    \hline
  $10_{105}$ & 3 & 2 & 3 \\
    \hline
  $10_{106}$ & 4 & 2 & 4 \\
    \hline
  $10_{107}$ & 3 & 1 & 3 \\
    \hline
  $10_{108}$ & 3 & 2 & 3 \\
    \hline
  $10_{109}$ & 4 & 2 & 4 \\
    \hline
  $10_{110}$ & 3 & 2 & 3 \\
    \hline
  $10_{111}$ & 3 & 2 & 3 \\
    \hline
  $10_{112}$ & 4 & 2 & 4 \\
    \hline
  $10_{113}$ & 3 & 1 & 3 \\
    \hline
  $10_{114}$ & 3 & 1 & 3 \\
    \hline
  $10_{115}$ & 3 & 2 & 3 \\
    \hline
  $10_{116}$ & 4 & 2 & 4 \\
    \hline
  $10_{117}$ & 3 & 2 & 3 \\
    \hline
  $10_{118}$ & 4 & 1 & 4 \\
    \hline
  $10_{119}$ & 3 & 1 & 3 \\
    \hline
  $10_{120}$ & 2 & 3 & 3 \\
    \hline
  $10_{121}$ & 3 & 2 & 3 \\
    \hline
  $10_{122}$ & 3 & 2 & 3 \\
    \hline
  $10_{123}$ & 4 & 2 & 4 \\
    \hline
  $10_{124}$ & 4 & 4 & 4 \\
    \hline
  $10_{125}$ & 3 & 2 & 3 \\
    \hline
  $10_{126}$ & 3 & 2 & 3 \\
    \hline
  \end{tabular}
 \end{center}
\end{minipage}
\hspace{0.5cm}
\begin{minipage}[t]{4.5cm}
 \begin{center}
  \begin{tabular}{|c@{\quad\vrule width0.8pt\quad}c|c|c|c|}
    \hline
  knot & $g$ & $u$ & $c$ \\
  \noalign{\hrule height 0.8pt}
  $10_{127}$ & 3 & 2 & 3 \\
    \hline
  $10_{128}$ & 3 & 3 & 3 \\
    \hline
  $10_{129}$ & 2 & 1 & $X$ \\
    \hline
  $10_{130}$ & 2 & 2 & $X$ \\
    \hline
  $10_{131}$ & 2 & 1 & $X$ \\
    \hline
  $10_{132}$ & 2 & 1 & 2\\
    \hline
  $10_{133}$ & 2 & 1 & 2 \\
    \hline
  $10_{134}$ & 3 & 3 & 3 \\
    \hline
  $10_{135}$ & 2 & 2 & 2 \\
    \hline
  $10_{136}$ & 2 & 1 & 2 \\
    \hline
  $10_{137}$ & 2 & 1 & 2\\
    \hline
  $10_{138}$ & 3 & 2 & 3 \\
    \hline
  $10_{139}$ & 4 & 4 & 4 \\
    \hline
  $10_{140}$ & 2 & 2 & 2 \\
    \hline
  $10_{141}$ & 3 & 1 & 3 \\
    \hline
  $10_{142}$ & 3 & 3 & 3 \\
    \hline
  $10_{143}$ & 3 & 1 & 3 \\
    \hline
  $10_{144}$ & 2 & 2 & 2 \\
    \hline
  $10_{145}$ & 2 & 2 & 2 \\
    \hline
  $10_{146}$ & 2 & 1 & 2 \\
    \hline
  $10_{147}$ & 2 & 1 & 2 \\
    \hline
  $10_{148}$ & 3 & 2 & 3 \\
    \hline
  $10_{149}$ & 3 & 2 & 3 \\
    \hline
  $10_{150}$ & 3 & 2 & 3 \\
    \hline
  $10_{151}$ & 3 & 2 & 3 \\
    \hline
  $10_{152}$ & 4 & 4 & 4 \\
    \hline
  $10_{153}$ & 3 & 2 & 3 \\
    \hline
  $10_{154}$ & 3 & 3 & 3 \\
    \hline
  $10_{155}$ & 3 & 2 & 3 \\
    \hline
  $10_{156}$ & 3 & 1 & 3 \\
    \hline
  $10_{157}$ & 3 & 2 & 3 \\
    \hline
  $10_{158}$ & 3 & 2 & 3 \\
    \hline
  $10_{159}$ & 3 & 1 & 3 \\
    \hline
  $10_{160}$ & 3 & 2 & 3 \\
    \hline
  $10_{161}$ & 3 & 3 & 3 \\
    \hline
  $10_{162}$ & 3 & 3 & 3 \\
    \hline
  $10_{163}$ & 2 & 2 & $X$ \\
    \hline
  $10_{164}$ & 3 & 2 & 3 \\
    \hline
  $10_{165}$ & 2 & 1 & $X$ \\
    \hline
  $10_{166}$ & 2 & 2 & $X$ \\
    \hline  
             &   &   &     \\
             &   &   &     \\
    \hline  
  \end{tabular}
 \end{center}
\end{minipage}


{\small

}

\end{document}